\documentclass[12pt]{amsart}
\usepackage{amssymb,amsmath,amsthm,latexsym}
\usepackage{mathrsfs}
\usepackage{a4wide}
\usepackage[usenames,dvipsnames]{color}
\usepackage{euscript}
\usepackage{graphicx}
\usepackage{comment}
\usepackage{mdwlist}
\usepackage{mathtools,dsfont,wasysym}
\usepackage[makeroom,samesize]{cancel}

\usepackage[dvipsnames]{xcolor}

\DeclareFontFamily{U}{mathx}{\hyphenchar\font45}
\DeclareFontShape{U}{mathx}{m}{n}{
      <5> <6> <7> <8> <9> <10>
      <10.95> <12> <14.4> <17.28> <20.74> <24.88>
      mathx10
      }{}

\newcommand{\nn}[1]{{\vert\kern-0.25ex\vert\kern-0.25ex\vert #1 
    \vert\kern-0.25ex\vert\kern-0.25ex\vert}}

\DeclareMathOperator{\supp}{supp}

\newtheorem{theorem}{Theorem}
\newtheorem{lemma}[theorem]{Lemma}

\theoremstyle{remark}
\newtheorem{remark}[theorem]{Remark}

\newtheorem*{remark*}{Remark}
\theoremstyle{definition}
\newtheorem{definition}[theorem]{Definition}

\numberwithin{equation}{section}

\newcounter{maintheorem}

\newtheorem{mainth}[maintheorem]{Theorem}
\newtheorem*{mainthprime*}{Theorem A$^\prime$}
\makeatother

\renewcommand{\leq}{\leqslant}
\renewcommand{\geq}{\geqslant}
\newcounter{smallromans}

{\end{list}}

\newcounter{smallromansdash}

{\end{list}}

\newcounter{bigromans} 
  {\end{list}}

\begin{document}

\baselineskip=17pt

\title{Isometries of combinatorial Tsirelson spaces}
\author{Natalia Maślany}
\address{ Jagiellonian University, Doctoral School of Exact and Natural Sciences, Faculty of Mathematics and Computer Science, Institute of Mathematics, {\L}ojasiewicza 6, 30-348 Krak\'ow, Poland and Institute of Mathematics, Czech Academy of Sciences, \v{Z}itn\'{a} 25, 115~67 Prague 1, Czech Republic}
\email{nataliamaslany97@gmail.com}
\thanks{The author was supported by GAČR grant GF20-22230L and received funding in the scope Research Support Module at the Jagiellonian University in Krak\'{o}w. }

\begin{abstract}
    We extend existing results that characterize isometries on the Tsirelson-type spaces $T\big[\frac{1}{n}, \mathcal{S}_1\big]$ ($n\in \mathbb{N}, n\geq 2$) to the class $T[\theta, \mathcal{S}_{\alpha}]$ \big($\theta \in \big(0, \frac{1}{2}\big]$, $1\leqslant \alpha < \omega_1$\big), where $\mathcal{S}_{\alpha}$ denote the Schreier families of order $\alpha$. 
    We prove that every isometry on $T[\theta, \mathcal{S}_1]$ \big($\theta \in \big(0, \frac{1}{2}\big]$\big) is determined by a permutation of the first $\lceil {\theta}^{-1} \rceil$ elements of the canonical unit basis followed by a possible sign-change of the corresponding coordinates together with a sign-change of the remaining coordinates. Moreover, we show that for the spaces $T[\theta, \mathcal{S}_{\alpha}]$ \big($\theta \in \big(0, \frac{1}{2}\big]$, $2\leqslant \alpha < \omega_1$\big) the isometries exhibit a more rigid character, namely, they are all implemented by a sign-change operation of the vector coordinates. 
\end{abstract}

\subjclass[2020]{46B04, 46B25, 46B45} 
\keywords{combinatorial spaces, combinatorial Tsirelson spaces, higher-order Schreier families, isometry group, regular families, Schreier families}

\maketitle
\section{Introduction and the main result}

The well-known Tsirelson space $T$ (in the setting of Figiel and Johnson \cite{FigielJohnson1974}, i.e., the dual of the space constructed by Tsirelson \cite{tsirelson1974impossible}, the first example of a space containing no isomorphic copies of $c_0$ or $\ell_p$ for $1\leq p < \infty$) may be regarded as special instance of a space from a~double-parameter family of Banach spaces $T[\theta, \mathcal{S}_{\alpha}]$ \big($\theta \in \big(0, \frac{1}{2}\big]$, $1\leqslant \alpha < \omega_1$\big), where $\alpha$ is a~countable ordinal and  $S_\alpha$ is the Schreier family of order $\alpha$. We emphasize that in this paper the Schreier families are defined to satisfy some natural additional conditions (see the discussion after Definition \ref{d3} in Preliminaries). For brevity, we call members of this family \emph{combinatorial Tsirelson spaces}, which appears to be in line with the terminology used, e.g., in \cite{BrechFerencziTcaciuc2020}. (These are, of course, special cases of the so-called mixed Tsirelson spaces whose definition allows the parameter $\theta$ to vary, but by employing this name we want to emphasize the underlying family of sets rather than the numeric parameter.)\smallskip

The aim of this paper is to delineate the structure of isometries on combinatorial Tsirelson spaces. We refer to the recent excellent survey \cite{antunes2022surjective} for further results and references concerning the problem of characerization of isometries on Banach (sequence) spaces.\smallskip

In \cite[Theorem 4.1]{antunes2022surjective} the authors provide a characterization of  (linear) isometries of the spaces $T\big[\frac{1}{n}, \mathcal{S}_1\big]$ for $n\in \mathbb{N}$, $n\geq 2$, which we take as a departure point for our considerations and extend it to the whole scale of spaces $T\big[\theta, \mathcal{S}_1\big]$ \big($\theta \in \big(0,\frac{1}{2} \big]\big).$ Let us then record the first main result. (In this paper, all considered Banach spaces are real; by an \emph{isometry} we understand a~linear isometry.)
\begin{mainth}\label{Th:A}
    Let $\theta \in \big(0,\frac{1}{2} \big]$. If $U\colon T\big[\theta, \mathcal{S}_1\big]\to T\big[\theta, \mathcal{S}_1\big]$ is an isometry, then
    \[
        U e_i = \left\{\begin{array}{ll} \varepsilon_i e_{\pi(i)}, &  1\leq i\leq \lceil \theta^{-1}  \rceil\\
        \varepsilon_i e_i, & i > \lceil \theta^{-1}  \rceil
        \end{array} \right. \quad(i\in \mathbb N)
    \]
    for some $\{-1,1\}$-valued sequence $(\varepsilon_i)_{i=1}^\infty$ and a permutation $\pi$ of $\big\{1,2,\ldots, \lceil \theta^{-1}  \rceil\big\}.$
\end{mainth}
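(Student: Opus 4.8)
The plan is to exploit the fact that an isometry $U$ must preserve the set of extreme points of the unit ball of $T[\theta,\mathcal S_1]$, and — more usefully — must preserve finer isometric invariants attached to individual basis vectors. First I would record the norming description of $\|\cdot\|$: for $x=\sum x_i e_i$ one has $\|x\|=\max\{\|x\|_\infty,\ \theta\sup\sum_{k}\|E_k x\|\}$, the sup taken over admissible (i.e. $\mathcal S_1$-admissible) successive block sets $E_1<\dots<E_d$. From this I extract two invariants. (a) The quantity $\sup\{t>0:\ \|x\pm t e\|=\|x\|\ \text{for the relevant }e\}$ detects whether a coordinate direction is ``flat'' at $x$; concretely, $\|e_i\|=1$ for all $i$, and for $i\neq j$ we have $\|e_i+e_j\|=1$ when $\theta<\tfrac12$ but $\|e_i+e_j\|$ can equal $2\theta<1$-type values only through longer combinations — the point is to identify, purely isometrically, the largest $m$ for which some norm-one vector is supported on $m$ basis elements with all coefficients of modulus $1$; this $m$ equals $\lceil\theta^{-1}\rceil$ because $\theta\cdot\lceil\theta^{-1}\rceil\ge 1$ while any $\mathcal S_1$-admissible family covering $\{1,\dots,\lceil\theta^{-1}\rceil-1\}$ with singletons has at most $\lceil\theta^{-1}\rceil-1<\theta^{-1}$ sets. (b) For a single basis vector $e_i$, the ``small perturbation'' behaviour: for which norm-one $y$ disjointly supported from $e_i$ is $\|e_i+y\|=1$? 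When $i\le\lceil\theta^{-1}\rceil$ the vector $e_i$ can be a singleton inside a maximal admissible family, so $e_i$ ``absorbs'' many such $y$; when $i>\lceil\theta^{-1}\rceil$ it cannot (a family with a singleton at position $i$ has room for at most $i-1\ge\lceil\theta^{-1}\rceil$ further singletons but the $\mathcal S_1$-admissibility forces the whole family to have $\le i$ sets, so the constraint is genuinely weaker). This asymmetry between the ``initial segment'' and the ``tail'' is the structural heart of the argument.

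Next I would show $U$ sends each basis vector to a signed basis vector. Since $U$ is an isometry onto, $U$ and $U^{-1}$ both preserve all the invariants above. Using that the extreme points of the ball are exactly $\pm e_i$ is false here (the ball has many extreme points), so instead I argue: $Ue_i$ is a norm-one vector, and by applying the finite-support / flatness invariant (a) together with a convexity argument (if $Ue_i$ had two nonzero coordinates $a,b$ with $|a|,|b|<\|Ue_i\|$ one could wiggle and violate the strict extremality that $e_i$ enjoys among norm-one vectors of its ``type''), one forces $Ue_i=\varepsilon_i e_{\sigma(i)}$ for a sign $\varepsilon_i\in\{-1,1\}$ and an injection $\sigma\colon\mathbb N\to\mathbb N$; surjectivity of $U$ makes $\sigma$ a bijection. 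This is the step I expect to be the main obstacle: ruling out that $Ue_i$ is genuinely spread over several coordinates requires a clean isometric characterisation of $\{e_i\}$ inside the unit sphere, and getting that characterisation exactly right — tight enough to pin down a single coordinate, robust enough to survive in all $T[\theta,\mathcal S_1]$ including $\theta=\tfrac12$ — is delicate; I would handle it by combining the norm formula with an explicit family of test vectors $e_i\pm\theta\sum_{k\in F}e_k$ for suitable $\mathcal S_1$-sets $F$.

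Finally, with $Ue_i=\varepsilon_i e_{\sigma(i)}$ established, I determine $\sigma$. Invariant (b) distinguishes indices in $\{1,\dots,\lceil\theta^{-1}\rceil\}$ from indices $>\lceil\theta^{-1}\rceil$: if $i\le\lceil\theta^{-1}\rceil$ then there is a norm-one $y$ with $\operatorname{supp} y\cap\{i\}=\emptyset$, $\operatorname{supp} y$ arbitrarily far out, and $\|e_i+y\|=1$ realised via a maximal singleton family containing $\{i\}$; applying $U$ shows $e_{\sigma(i)}$ must admit the same, forcing $\sigma(i)\le\lceil\theta^{-1}\rceil$. Hence $\sigma$ maps $\{1,\dots,\lceil\theta^{-1}\rceil\}$ into (and, being a bijection, onto) itself, so $\pi:=\sigma|_{\{1,\dots,\lceil\theta^{-1}\rceil\}}$ is a permutation of that set and $\sigma(i)=i$ for $i>\lceil\theta^{-1}\rceil$. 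To finish, one checks the converse direction is automatic (any such $U$ is visibly an isometry because the norm only depends on $|x_i|$ through an expression symmetric under permuting the first $\lceil\theta^{-1}\rceil$ coordinates, since for $\mathcal S_1$ every subset of $\{1,\dots,k\}$ of size $\le k$ is admissible), which also serves as a consistency check that the characterisation is not vacuous. The only genuinely new ingredient beyond \cite[Theorem 4.1]{antunes2022surjective} is replacing $\theta=1/n$ by arbitrary $\theta\in(0,\tfrac12]$, which is absorbed by systematically writing $\lceil\theta^{-1}\rceil$ wherever $n$ appeared and re-checking the two admissibility counts above.
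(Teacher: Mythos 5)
Your outline names some of the right invariants but leaves the load-bearing steps unproved, and a few of its side claims are false. The central difficulty --- showing that each $Ue_i$ is a signed unit vector rather than a vector genuinely spread over several coordinates --- is exactly the point you defer (``the step I expect to be the main obstacle''), and nothing in the proposal closes it. Worse, the tool you reach for is unavailable: you repeatedly invoke $U^{-1}$ and ``invariants preserved by $U$ and $U^{-1}$'', but the theorem does not assume $U$ surjective; surjectivity is a \emph{consequence} recorded as a remark in the paper, so using it is circular. The paper's actual route is quite different and concrete: using that $(Ue_i)_{i=1}^\infty$ is weakly null, the lower block estimate \eqref{w2}, and Lemma~\ref{ogl}, it first proves that $U[e_1,\dots,e_n]\subset[e_1,\dots,e_n]$ for every $n\geq\lceil\theta^{-1}\rceil$ (so each $Ue_j$ is finitely supported); then, since the norm restricted to $[e_1,\dots,e_{\lceil\theta^{-1}\rceil}]$ is the supremum norm, the tests $\|e_i\pm e_j\|=1$ pin down a signed permutation of the initial block, and an induction on $n$ handles the tail. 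Your sketch contains no substitute for that first, weak-null step.

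Two further genuine gaps. First, even granting $Ue_i=\varepsilon_ie_{\sigma(i)}$ with $\sigma$ a bijection preserving $\{1,\dots,\lceil\theta^{-1}\rceil\}$ setwise, the conclusion $\sigma(i)=i$ for $i>\lceil\theta^{-1}\rceil$ does not follow: a bijection fixing the initial segment setwise may still permute the tail, and excluding this needs an argument. For instance, with $\theta^{-1}=2.1$ the transposition of $e_4$ and $e_5$ sends $e_4+e_6+e_7+e_8+e_9$, of norm $4\theta$ by Lemma~\ref{ogl}, to $e_5+e_6+\dots+e_9$, of norm $5\theta$. Second, your closing ``consistency check'' --- that any signed permutation of the first $\lceil\theta^{-1}\rceil$ coordinates is ``visibly an isometry'' --- is false: Remark~\ref{l1} of the paper gives the explicit counterexample $\theta^{-1}=2.1$, where swapping $e_1$ and $e_3$ maps a norm-one vector to one of norm $\tfrac{30}{21}$ (the theorem is deliberately stated as a one-way implication). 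Relatedly, your invariant (a) is miscomputed: for $\theta^{-1}=2.1$ the vector $e_1+e_2+e_3+e_4$ also has norm $1$, so the largest unimodular support of a norm-one vector is not $\lceil\theta^{-1}\rceil$. As it stands the proposal is a plan with the hardest parts missing, not a proof.
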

(Here $(e_i)_{i=1}^\infty$ is the standard unit vector basis of $T\big[\theta, \mathcal{S}_1\big]$ and $\lceil \theta^{-1} \rceil$ is the ceil of $\theta^{-1}$, i.e., the least integer that $\theta^{-1}$ does not exceed.)  

Then we answer the question from \cite{antunes2022surjective}.
Indeed, we characterize the linear isometries of the spaces  $T[\theta, \mathcal{S}_{\alpha}]$ ($\theta \in \big(0, \frac{1}{2}\big]$, $2\leqslant \alpha < \omega_1$) by proving the following second main result:
\begin{mainth}\label{Th:B}
    Let $\theta \in \big(0,\frac{1}{2} \big]$ and let $\alpha\geq 2$ be a countable ordinal. Then an operator $U\colon T[\theta,\mathcal{S}_{\alpha}] \to T[\theta,\mathcal{S}_{\alpha}]$ is an isometry if and only if $Ue_i = \varepsilon_i e_i$ for $i \in \mathbb{N}$ and some $\{-1,1\}$-valued sequence $(\varepsilon_i)_{i=1}^\infty.$
\end{mainth}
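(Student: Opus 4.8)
Since the unit vector basis $(e_i)$ of $T[\theta,\mathcal S_\alpha]$ is $1$-unconditional, $\|\sum_i a_ie_i\|=\|\sum_i|a_i|e_i\|$, so $e_i\mapsto\varepsilon_ie_i$ is a surjective linear isometry for any signs $(\varepsilon_i)$. It remains to prove that these are the only isometries.

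\textbf{Step 1: reduction to a signed permutation of the basis.} I would first show that an arbitrary isometry $U$ has the form $Ue_i=\varepsilon_ie_{\pi(i)}$ for a $\{-1,1\}$-valued sequence $(\varepsilon_i)$ and a permutation $\pi$ of $\mathbb N$, exactly along the lines one uses for Theorem~A: one singles out the set $\{\pm e_i:i\in\mathbb N\}$ inside $B_{T[\theta,\mathcal S_\alpha]}$ by an isometry-invariant property (for instance, among the norm-one vectors $x$ the $\pm e_i$ are distinguished through the "size" of $\{y:\|x+y\|=\|x-y\|=\|x\|=\|y\|\}$, using $1$-unconditionality and $\lceil\theta^{-1}\rceil\ge 2$), so that $U$ maps this set bijectively onto itself, and a bijection forces $\pi$ and the $\varepsilon_i$. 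Composing $U$ with the sign-change isometry $e_i\mapsto\varepsilon_{\pi^{-1}(i)}e_i$ I may assume $U=P_\pi$ is the permutation operator $\sum_i a_ie_i\mapsto\sum_i a_ie_{\pi(i)}$; the theorem thus reduces to showing $\pi=\mathrm{id}$. Observe that $P_\pi$ is an isometry iff $P_{\pi^{-1}}$ is, and that $P_\pi$ preserves $\|\cdot\|_\infty$ and carries $\|\sum_{i\in F}e_i\|$ to $\|\sum_{i\in\pi(F)}e_i\|$ for every finite $F$.

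\textbf{Step 2: $\pi(1)=1$.} For $\alpha\ge 2$ the unique $\mathcal S_\alpha$-set with minimum $1$ is $\{1\}$, so any $\mathcal S_\alpha$-admissible analysis of a vector $x$ with $\min\operatorname{supp}x=1$ whose first branch meets coordinate $1$ is trivial; iterating the norm equation gives $\|\lambda e_1+y\|=\max\{|\lambda|,\|y\|\}$ for all $y\in[e_2,e_3,\dots]$. By contrast, for each $n\ge 2$ there are $\mathcal S_\alpha$-admissible analyses rooted at $n$ of arbitrarily large yield: picking a block vector $y$ supported far above $n$, normalized so that $\|y\|=1$ while $\|y\|_\infty$ is tiny, and a nearly optimal $\mathcal S_\alpha$-admissible tree of $\operatorname{supp}y$ that can be prefixed by the branch $\{n\}$ (possible since for $\alpha\ge 2$ there are large $\mathcal S_\alpha$-sets with minimum $n$), one gets $\|e_n+y\|\ge\theta\big(1+\theta^{-1}\|y\|\big)=1+\theta>1$. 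Hence $e_1$ is the only basis vector $e_i$ for which $[e_i]$ is an $\ell_\infty$-summand in the direction of the remaining coordinates, and since this is isometry-invariant, $P_\pi e_1=e_1$, i.e.\ $\pi(1)=1$.

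\textbf{Step 3: $\pi=\mathrm{id}$, and the main obstacle.} I would now argue by strong induction: assuming $\pi$ fixes $\{1,\dots,n-1\}$, I want $\pi(n)=n$. Suppose instead $\pi(n)=n'>n$. The mechanism, genuinely special to $\alpha\ge 2$, is that the $\mathcal S_\alpha$-admissible analyses rooted at an index become \emph{strictly} richer as the index grows — whereas for $\mathcal S_1$ the first $\lceil\theta^{-1}\rceil$ indices sit at the same (flat) level of complexity, which is precisely what makes Theorem~A permit permuting them. Concretely, for the relevant $n,n'$ one wants a finitely supported test vector $x$ with $\min\operatorname{supp}x=n$, built from an $\mathcal S_\alpha$-admissible fan placed far above everything, whose norm is realized only by an analysis whose root sits at position $n$, and such that relocating that root to $n'$ strictly changes the norm; applying $P_\pi$ (which preserves all the norms $\|\sum_{i\in F}e_i\|$ and $\|\cdot\|_\infty$) then gives $\|P_\pi x\|\neq\|x\|$, a contradiction, completing the induction and the proof. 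The hard part is exactly this: producing a \emph{strict} inequality that pins down every coordinate $n\ge 2$, not merely separates it from coordinate $1$ as in Step~2, and doing so in a way that survives the fact that the single displaced coordinate interacts with an entirely uncontrolled rearrangement of the tail $[n,\infty)$ (so the restriction trick used to pass from Step~2 to the next coordinate breaks down, because after deleting $e_1$ the "first" coordinate of $[e_2,e_3,\dots]$ is no longer flat). Designing the test vectors and the monotone isometry invariant they witness requires careful use of the structure of $\mathcal S_\alpha$ — heredity, spreadability, and the precise ("natural") normalization fixed in the Preliminaries — together with the diagonal description of $\mathcal S_\alpha$ at limit ordinals along a fixed sequence $\alpha_k\uparrow\alpha$, which adds bookkeeping but no new idea.
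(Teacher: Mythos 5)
Your proposal does not close the argument: the two load-bearing steps are precisely the ones left unproved. In Step 1 you assert that every isometry of $T[\theta,\mathcal S_\alpha]$ must send the set $\{\pm e_i : i\in\mathbb N\}$ onto itself, proposing to distinguish these vectors by the ``size'' of $\{y:\|x+y\|=\|x-y\|=\|x\|=\|y\|\}$; this is not substantiated, and it is not in fact how Theorem~A is proved (there the basis-to-basis behaviour is extracted from the subspace invariance $U[e_1,\dots,e_n]\subset[e_1,\dots,e_n]$ together with a computation showing the norm is the sup norm on the first $\lceil\theta^{-1}\rceil$ coordinates, not from an extreme-point-type invariant). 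Without this reduction there is no permutation $\pi$ to speak of. More seriously, Step 3 --- the claim $\pi=\mathrm{id}$, which is the entire content of the theorem beyond the (true but comparatively easy) fact that $e_1$ is singled out --- is explicitly deferred: you describe what a test vector ``would'' need to do and acknowledge that constructing it and the monotone invariant it witnesses is ``the hard part.'' So the proposal identifies the difficulty correctly but does not resolve it.

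For comparison, the paper never reduces to a permutation at all. Writing $Ue_j=\sum_i a_i^je_i$, it shows directly that $U[e_1,\dots,e_n]\subset[e_1,\dots,e_n]$ for \emph{every} $n$ (not just $n\ge\lceil\theta^{-1}\rceil$ as in the $\mathcal S_1$ case): using that $(Ue_i)$ is weakly null, one selects indices $j_1<\dots<j_m$ above $\max\{n,\lceil\theta^{-1}\rceil\}$ forming a union of $n$ maximal consecutive $\mathcal S_\beta$-sets (where $\alpha=\beta+1$, with a diagonal variant at limit $\alpha$), so that $\{j,j_1,\dots,j_m\}$ is \emph{not} an $\mathcal S_\alpha$-set --- whence Lemma~\ref{ogl} pins $\|e_j+\sum_ie_{j_i}\|$ down to exactly $\theta m$ --- while the minima of the supports of the approximating blocks $x'<y_1<\dots<y_m$ still form an $\mathcal S_\alpha$-set, giving the lower bound $\theta(\|x'\|+\sum\|y_i\|)$. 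Comparing the two forces the tail of $Ue_j$ beyond coordinate $n$ to vanish. This triangularity, applied with $n=j$, gives $Ue_1=\pm e_1$ and then the elementary identity $\|e_k\pm e_n\|=1$ together with induction kills every off-diagonal coefficient $a^n_k$, $k<n$. The availability of the configuration of $n$ maximal $\mathcal S_\beta$-sets with minima above $n$ is exactly where $\alpha\ge2$ is used, and it replaces the strict-inequality test vectors you were unable to produce.
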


Let us record the following observation that we draw directly from the proofs of Theorems \ref{Th:A}--\ref{Th:B}, which may be of independent interest.
\begin{remark}
     Every isometry on $T[\theta, \mathcal{S}_{\alpha}]$ \big($\theta \in \big(0, \frac{1}{2}\big]$, $1\leqslant \alpha < \omega_1$\big) is surjective.
\end{remark}
\subsection*{Acknowledgement.} We would like to thank Kevin Beanland for helpful conversations and feedback concerning the present work.

\section{Preliminaries}

\subsection{Combinatorial spaces}
We will denote by $(e_i)_{i=1}^\infty$ the standard unit vector basis of $c_{00}$ and by $[\mathbb{N}]^{<\omega}$ the family of finite subsets of $\mathbb{N}.$ For the sets $F_1, F_2 \in [\mathbb{N}]^{<\omega}$ we use the following notation: $F_1<F_2$, whenever $\max F_1 < \min F_2$ and in such case we say that these sets are \textit{consecutive}. Moreover, for $n\in \mathbb{N},$ we write $F_1 < n$ instead of $F_1 < \{n\}.$

\begin{definition}\label{sp}
A family $\mathcal{F}\subset [\mathbb{N}]^{<\omega}$ is {\em regular}, whenever it is simultaneously
\begin{itemize}
		\item \textit{hereditary} \big($F \in \mathcal{F}$ and $G \subset F \implies G \in \mathcal{F}$\big);
		\item \textit{spreading} \big($\{l_1,l_2,\dots,l_n\} \in \mathcal{F}$ and $l_i \leqslant k_i \implies \{k_1,k_2,\dots,k_n\} \in \mathcal{F}$\big);
		\item \textit{compact} as a subset of the Cantor set $\{0,1\}^{\mathbb{N}}$ via the natural identification of $F \in \mathcal{F}$ with 
        \[
            \chi_F=\sum_{i \in F} e_i \in \{0,1\}^{\mathbb{N}}.
        \]
\end{itemize}
\end{definition}
If $\mathcal{F}$ is a regular family, we say that $F\in \mathcal{F}$ is \textit{maximal}, whenever there is no $n\in \mathbb{N}$ with $\max F <n$ so that $F\cup \{n\}\in \mathcal{F}$. The simplest examples of regular families include
\[
    \mathcal{A}_n := \big\{F\in[\mathbb{N}]^{<\omega} \colon |F|\leqslant n\big\} \quad(n\in \mathbb N)
\]
i.e., for a given $n\in \mathbb N$, the family comprising subsets of $\mathbb N$ of cardinality at most $n$. We employ these families to define the family of Schreier sets in the following way.
\begin{definition}\label{d3}
 Given a countable ordinal $\alpha$, we define inductively the Schreier family of order $\alpha$  as follows:
 \begin{itemize}
     \item $\mathcal{S}_{0} := \mathcal A_1;$
     \item if $\alpha$ is a successor ordinal, i.e., $\alpha=\beta +1$ for some $\beta < \omega_1$, then 
        \[
            \mathcal{S}_{\alpha}:=\Bigg\{ \bigcup_{i=1}^{d} S^{i}_{\beta}\colon d \leq S^{1}_{\beta} < S^{2}_{\beta} < \cdots < S^{d}_{\beta},\,\, \big\{ S^{i}_{\beta} \big\}_{i=1}^d \subset \mathcal{S}_{\beta} \text{ and } d \in \mathbb{N} \Bigg\} \cup\big\{\emptyset\big\};
        \]
     \item if $\alpha$ is a non-zero limit ordinal and $\left(\alpha_{n}\right)_{n=1}^\infty$ is a fixed strictly increasing 
     sequence of successor ordinals converging to $\alpha$ with 
     $\mathcal S_{\beta_n} \subset \mathcal S_{\beta_{n+1}}$
     for all $n \in \mathbb{N}$, where $\alpha_n = \beta_n+1$ for all $n \in \mathbb{N}$, we set 
        \begin{equation*}
        \mathcal{S}_{\alpha} :=\big\{S_{\alpha_n} \in[\mathbb{N}]^{<\omega}\colon S_{\alpha_n} \in    \mathcal{S}_{\alpha_{n}},\,\, n \leq \min S_{\alpha_n} \text{ for some n} \big\} \cup\big\{\emptyset\big\}. 
        \end{equation*}
 \end{itemize}
\end{definition}

We emphasize that in the case where $\alpha$ is a limit ordinal, we require the sequence $(\alpha_n)_{n=1}^\infty$ cofinal in $\alpha$ to comprise successor ordinals as needed in the proof of Theorem \ref{Th:B}. We may (and do) also assume that $S_{\alpha_n} \subset S_{\alpha_{n+1}}$ for all $n \in \mathbb{N}$, which we will also utilize in the proof of Theorem \ref{Th:B}. Indeed, repeating the proof of \cite[Proposition 3.2.]{causey2017concerning} in the case of Schreier families $\big\{ S_{\xi} \big\}_{\xi < \omega_1}$ which are multiplicative in the sense of \cite{causey2017concerning} we obtain the required result also in the case of Schreier families.
Elements of $\mathcal S_{\alpha}$ are called $\mathcal S_{\alpha}$\emph{-sets}.

Note that the Schreier families $\left\{\mathcal{S}_{\alpha}\right\}_{\alpha<\omega_{1}}$ do depend on the choice of the sequences $\left\{\alpha_{n}\right\}_{n=1}^{\infty}$ converging to each limit ordinal $\alpha$. It is a well-known fact \big(\cite{causey2017concerning}[Proposition 3.2] or \cite{todorcevic2005ramsey}\big) that they are always regular families. 

\subsection{Combinatorial Tsirelson spaces}
For a regular family $\mathcal{F}$ and $\theta\in \big(0,\frac{1}{2}\big]$, we define the Banach space $T[\theta,\mathcal{F}]$ that we shall later specialize to a combinatorial Tsirelson space $T[\theta, \mathcal{S}_{\alpha}]$ for some countable ordinal $\alpha$.\smallskip

For a vector $x=(a_1, a_2, \ldots, a_n)\in c_{00}$ and a finite set $E\subset \mathbb N$, we use the same symbol $Ex$ to denote the projection of $x$ onto the space $[e_i \colon i \in E],$ given by
\begin{equation}
    E\Bigg(\sum_{i=1}^{n} a_i e_i\Bigg) = \sum_{i\in E} a_i e_i.
\end{equation}

If the cardinality of set $E$ is equal $k,$ then we say that $E$ is a \emph{$k$-element projection}.

We denote by $\|\cdot\|_0$ the supremum norm on $c_{00}.$ Suppose that for some $n \in \mathbb{N}$ the norm $\|\cdot\|_n$ has been defined. Let
\begin{equation*}\label{nnorm} 
 \|x\|_{n+1}= \max \big\{ \|x\|_n, \|x\|_{T_n} \big\}\quad (n\in \mathbb N),
\end{equation*}
where
\begin{equation*}
 \|x\|_{T_n}= \sup \bigg\{ \theta \sum_{i=1}^d \big\|E_ix\big\|_n\colon E_1 <\cdots < E_d,\, d \in \mathbb{N},\, \{E_i\}_{i=1}^d \subset [\mathbb{N}]^{<\omega}, \, \{\min E_{i}\}_{i=1}^d \in \mathcal{F}\bigg\}.  
\end{equation*}
We define the norm $\|x\|_{\theta,\mathcal{F}} := \sup_{n \in \mathbb{N}}\|x\|_n$ and denote by $T[\theta,\mathcal{F}]$ the completion of $c_{00}$ with respect to it. 

A proof by induction shows that this norm is majorized by the $\ell_1$-norm and that satisfies the following implicit formula for $x \in T[\theta,\mathcal{F}]$: 
\begin{equation}\label{im}
    \|x\|_{\theta,\mathcal{F}}= \max \big\{\|x\|_{\infty} , \|x\|_T \big\},
\end{equation}
where
\begin{equation*}
    \|x\|_T= \sup\bigg\{\theta \sum_{i=1}^d \big\|E_ix\big\|_{\theta,\mathcal{F}} : E_1 < \cdots < E_d,\, d \in \mathbb{N},\, \{E_i\}_{i=1}^d \subset [\mathbb{N}]^{<\omega}, \,\{\min E_i\}_{i=1}^d \in \mathcal{F}\bigg\}.
\end{equation*}

It follows easily from the definition that the unit vectors $(e_i)_{i=1}^\infty$ form an 1-unconditional basis of the space  $T[\theta,\mathcal{S}_{\alpha}]$ for a countable ordinal $\alpha$.

For $x_1,x_2 \in c_{00}$, we write $x_1<x_2$ whenever $\supp\, x_1 < \supp\, x_2$ and for $n\in\mathbb{N}$ we simplify the notation of $\supp\, x_1<n$ to $x_1<n.$

In this paper, we will use the following convention: we say that the norm of an element $x \in T[\theta,\mathcal{F}]$ is given by sets $E_1<E_2<\cdots < E_{d}$ for some $d\in \mathbb{N}$ (with $\{\min E_i\}_{i=1}^d \in \mathcal{F}$) precisely when
\begin{equation*}
    \|x\|_{\theta,\mathcal{F}}= \theta \cdot \sum_{i=1}^{d}\big\|E_i x\big\|_{\theta,\mathcal{F}}.
\end{equation*}

It follows easily from the definition that if $(x_i)_{i=1}^d$ is a block sequence in $T[\theta,\mathcal{F}]$ (i.e., $x_1<x_2<\ldots<x_d$) with $\{\min\text{supp}\,x_i\}_{i=1}^d\in \mathcal{F}$ we have  
\begin{equation}
\label{w1}
 \bigg\|\sum_{i=1}^n x_i\bigg\|_{\theta,\mathcal{F}} \geqslant \theta\sum_{i=1}^n\|x_i\|_{\theta,\mathcal{F}}. 
\end{equation}
It follows that in the case of the space $T[\theta,\mathcal{S}_1]$, \eqref{w1} yields that if $d\leqslant x_1<\cdots <x_d$, then 
\begin{equation}\label{w2}
     \bigg\|\sum_{i=1}^d x_i\bigg\|_{\theta,\mathcal{S}_1} \geqslant \theta\sum_{i=1}^d\|x_i\|_{\theta,\mathcal{S}_1}   
\end{equation}
For brevity, we write $\|\cdot \|$ instead of $\|\cdot \|_{\theta,\mathcal{S}_{\alpha}},$ where $\theta \in \big(0, \frac{1}{2}\big]$, $1\leqslant \alpha < \omega_1.$ Let us record the following lemma that we shall later use extensively.

\begin{lemma}\label{obs}
    Let $\theta \in \big(0, \frac{1}{2}\big]$ and $1\leqslant \alpha < \omega_1$. Suppose that $x \in T[\theta, \mathcal{S}_{\alpha}]$ is a vector whose coordinates are either $0$ or $1$. Fix $k\geq 2.$ Let $E$ be a $k$-element set. Then the norm given by the set $E$ is not greater than the norm given by the $k-1$ many singleton projections. (For simplicity we assume we only project to non-zero coordinates and all of these projections are admissible.)
\end{lemma}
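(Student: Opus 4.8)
The plan is to unwind the (deliberately informal) statement into a single clean inequality about norms of characteristic vectors and then to prove that inequality in a couple of lines using the implicit formula \eqref{im}. First, the \emph{reformulation}: since $x$ has all coordinates in $\{0,1\}$ and, by the standing assumption, $E$ is contained in the support of $x$, we have $Ex=\chi_E:=\sum_{i\in E}e_i$, so ``the norm given by the set $E$'' is the number $\theta\|\chi_E\|$. Likewise each of the $k-1$ singleton projections has the form $\{m\}x$ with $x_m=1$, and $\|\{m\}x\|=\|e_m\|=1$ (directly from the definition of the norm), so ``the norm given by the $k-1$ singleton projections'' is $\theta(k-1)$. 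The hypotheses that we project onto non-zero coordinates and that the singleton family is admissible are exactly what make these two expressions legitimate norming quantities in the sense of the convention fixed above. Hence the lemma is equivalent to the inequality $\|\chi_E\|\leq k-1$ whenever $|E|=k\geq 2$, and I would prove the slightly more uniform statement $\|\chi_G\|\leq\max\{1,\theta\,|G|\}$ for every finite $G\subset\mathbb N$.

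For the \emph{norm estimate}, I apply \eqref{im} to $x=\chi_G$: its first term is $\|\chi_G\|_\infty=1$ (for $G\neq\emptyset$). For the second term, fix consecutive sets $E_1<\cdots<E_d$ with $\{\min E_i\}_{i=1}^d\in\mathcal S_\alpha$. Being consecutive, the $E_i$ are pairwise disjoint, hence so are the sets $E_i\cap G\subset G$, so that $\sum_{i=1}^d|E_i\cap G|\leq|G|$; moreover $E_i\chi_G=\chi_{E_i\cap G}$ and, since the norm is majorized by the $\ell_1$-norm, $\|E_i\chi_G\|=\|\chi_{E_i\cap G}\|\leq|E_i\cap G|$. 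Summing,
\[
\theta\sum_{i=1}^{d}\|E_i\chi_G\|\;\leq\;\theta\sum_{i=1}^{d}|E_i\cap G|\;\leq\;\theta\,|G|,
\]
and taking the supremum over all admissible choices of $(E_i)$ gives $\|\chi_G\|\leq\max\{1,\theta|G|\}$. Finally, for $|E|=k\geq 2$ and $\theta\leq\tfrac12$ this yields $\|\chi_E\|\leq\max\{1,\theta k\}\leq\max\{1,k/2\}\leq k-1$, which is the asserted inequality.

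\emph{What is delicate:} essentially nothing, once the reformulation in the first paragraph is in place — that bookkeeping is the only step that needs care. The hypothesis $\theta\leq\tfrac12$ is used only at the very end, through $\theta k\leq k/2\leq k-1$ (valid because $k\geq 2$), and the argument uses nothing about $\mathcal S_\alpha$ beyond its being the regular family defining the norm, so the statement in fact holds for $T[\theta,\mathcal F]$ with $\mathcal F$ an arbitrary regular family. If one preferred not to invoke \eqref{im}, the bound $\|\chi_G\|_n\leq\max\{1,\theta|G|\}$ (hence $\leq\max\{1,|G|-1\}$) follows just as easily by induction on the level $n$ in the definition of $\|\cdot\|$, the inductive step being precisely the disjointness estimate displayed above.
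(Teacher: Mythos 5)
Your proof is correct and follows essentially the same route as the paper's: both reduce to the bound $\theta\|Ex\|\leq\theta\max\{1,\theta k\}$ via the $\ell_1$-majorization of the norm and then compare with $\theta(k-1)$ using $\theta\leq\tfrac12$ and $k\geq2$. Your write-up is a bit more careful about the bookkeeping (intersecting the $E_i$ with $G$ and taking the supremum explicitly), but the substance is identical.
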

\begin{proof}
    Fix $k\geq 2.$ The norm given by $k-1$ many $1$-element projections is $\theta (k-1)$.
    Let $E$ be a $k$-element set and let $E_1<E_2<\dots<E_d$. Then 
    \begin{equation*}
    \begin{split}
    \theta \cdot \| Ex \| &= \theta \cdot \max \bigg\{ 1,\, \theta \cdot \sum_{i=1}^d \|E_i x\|  \bigg\} \\
    &\leq \theta \cdot \max \bigg\{ 1,\, \theta \cdot \sum_{i=1}^d \|E_i x\|_{\ell_1}  \bigg\} \\
    &= \theta \cdot \max \bigg\{ 1,\, \theta \cdot \sum_{i=1}^d |E_i| \bigg\} \\
    &\leq \theta \cdot \max \{1, \, \theta k\}.
    \end{split}
    \end{equation*}
    Since $\theta \in \big(0, \frac{1}{2}\big]$ and $k \geq 2$, so $\theta (k-1) \geq \theta^2 k.$
\end{proof}

\begin{lemma}\label{ogl}
    Let $\theta \in \big(0, \frac{1}{2}\big]$ and $1\leqslant \alpha < \omega_1$. Suppose that $x \in T[\theta, \mathcal{S}_{\alpha}]$ is given by the formula
    \begin{equation*}
        x = e_i + \sum_{j\in A} e_j,
    \end{equation*}
    where $A=\big\{ j_1, j_2, \ldots, j_{|A|}\big\}$ is an $\mathcal S_{\alpha}$-set with $A>i$ and $|A|\geq \lceil \theta^{-1} \rceil$. If $A\cup \{i\}$ is not an~$\mathcal S_{\alpha}$-set, then $\|x\|= \theta \cdot |A|.$
\end{lemma}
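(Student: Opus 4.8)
The plan is to bound $\|x\|$ from above by $\theta\cdot|A|$ and then invoke the reverse inequality~\eqref{w1}. The lower bound is immediate: since $A$ is an $\mathcal S_\alpha$-set, the singletons $\{j\}$ for $j\in A$ form an admissible block sequence (as $\{j_1,\dots,j_{|A|}\} = A \in \mathcal S_\alpha$), hence by~\eqref{w1} applied to the subvector $\sum_{j\in A}e_j$ we get $\|x\| \geq \|\sum_{j\in A} e_j\| \geq \theta\sum_{j\in A}\|e_j\| = \theta\cdot|A|$, using $1$-unconditionality of the basis to discard the $e_i$ term. So the whole content is the upper bound $\|x\| \leq \theta\cdot|A|$.

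For the upper bound I would argue by the implicit formula~\eqref{im}: either $\|x\| = \|x\|_\infty = 1$, which is $\leq\theta\cdot|A|$ since $|A|\geq\lceil\theta^{-1}\rceil\geq\theta^{-1}$, or the norm is attained (in the limit) by some admissible family $E_1<\cdots<E_d$ with $\{\min E_\ell\}_{\ell=1}^d \in \mathcal S_\alpha$, and I must show $\theta\sum_{\ell=1}^d\|E_\ell x\|\leq\theta\cdot|A|$. The key structural observation is that at most one $E_\ell$ can contain the coordinate $i$; call it $E_{\ell_0}$ (if it exists). Split into cases. If no $E_\ell$ meets $\{i\}$, then the $E_\ell$ only see coordinates in $A$, and since $x$ restricted to $A$ has $0/1$ coordinates, Lemma~\ref{obs} (or a direct $\ell_1$-estimate as in its proof) gives $\sum_\ell\|E_\ell x\| \leq \|x\|_{\ell_1\restriction A} = |A|$ — actually more carefully one uses that $\theta\sum_\ell\|E_\ell x\|$ is itself a candidate for $\|\sum_{j\in A}e_j\|$ and then bounds that norm by $|A|$ via the $\ell_1$ majorization; either way the bound $\theta|A|$ drops out, in fact with room to spare.

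The genuinely delicate case — and the one exploiting the hypothesis that $A\cup\{i\}\notin\mathcal S_\alpha$ — is when some $E_{\ell_0}$ contains $i$. Here I would use that $\min E_{\ell_0}\leq i$ and that $\{\min E_1,\dots,\min E_d\}\in\mathcal S_\alpha$. The point is to compare the "cost" of spending the set $E_{\ell_0}$ on the coordinate $i$ against simply absorbing $i$ into a singleton. Write $\theta\sum_\ell\|E_\ell x\| = \theta\|E_{\ell_0}x\| + \theta\sum_{\ell\neq\ell_0}\|E_\ell x\|$. Now $E_{\ell_0}x$ again has $0/1$ coordinates, and by Lemma~\ref{obs} its norm is at most the norm given by $|E_{\ell_0}\cap\supp x| - 1$ singletons, i.e.\ $\leq 1 + \theta(|E_{\ell_0}\cap A|)$ roughly; the remaining $E_\ell$ ($\ell\neq\ell_0$) lie in $A$ and their minima, together with $\min E_{\ell_0}$, lie in $\mathcal S_\alpha$. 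The crucial counting step is that the indices of $A$ actually used, together with $i$ (or with $\min E_{\ell_0}$ which is $\leq i$), cannot all be "charged at rate $\theta$" simultaneously, precisely because $A\cup\{i\}\notin\mathcal S_\alpha$ forces some cardinality/Schreier deficit that the $\theta\leq\frac12$ factor converts into the needed slack. I expect the main obstacle to be making this last comparison airtight: one needs to track that whenever the family "wants" to treat $i$ as a full extra singleton on top of all of $A$, the admissibility condition $\{\min E_\ell\}\in\mathcal S_\alpha$ is violated — exactly the content of $A\cup\{i\}\notin\mathcal S_\alpha$ combined with the spreading property — so the optimal family must sacrifice something, and a careful bookkeeping of that sacrifice against the factor $\theta$ yields $\theta\sum_\ell\|E_\ell x\|\leq\theta|A|$. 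Combining the two inequalities gives $\|x\| = \theta\cdot|A|$.
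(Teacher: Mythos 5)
Your overall strategy coincides with the paper's: the lower bound $\|x\|\geq\theta|A|$ from \eqref{w1} applied to the singletons of $A$, and an upper bound obtained by analysing an admissible family $E_1<\cdots<E_d$ and splitting according to whether the coordinate $i$ is covered. The lower bound and the case where no $E_\ell$ meets $i$ are fine (there the crude $\ell_1$ majorization $\|E_\ell x\|\leq|E_\ell\cap A|$ already gives $\theta\sum_\ell\|E_\ell x\|\leq\theta|A|$). The genuine gap is that the decisive case --- some $E_{\ell_0}\ni i$ --- is left as a plan (``a careful bookkeeping of that sacrifice against the factor $\theta$ yields the bound''), and this is exactly the case for which the hypothesis $A\cup\{i\}\notin\mathcal S_\alpha$ is needed; note that here plain $\ell_1$ majorization only gives $\theta(|A|+1)$, which is too weak, so the missing step is not cosmetic.

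The completion is, however, a one-liner from Lemma~\ref{obs}, and you already have every ingredient in place. Replace each $E_\ell$ by $E_\ell\cap\supp x$ (this changes nothing). If every such set is a singleton $\{s_\ell\}$ and the family covers all of $\{i\}\cup A$, then $d=|A|+1$ and, since $\min E_\ell\leq s_\ell$, the spreading property applied to $\{\min E_\ell\}_{\ell=1}^d\in\mathcal S_\alpha$ would force $\{i\}\cup A\in\mathcal S_\alpha$, contradicting the hypothesis. Hence either some point of $\supp x$ is omitted (at most $|A|$ singletons, total $\leq\theta|A|$), or some $E_\ell$ meets $\supp x$ in $k\geq 2$ points; in the latter case Lemma~\ref{obs} gives $\theta\|E_\ell x\|\leq\theta(k-1)$, i.e.\ every merge forfeits at least one unit, so $\theta\sum_{\ell=1}^d\|E_\ell x\|\leq\theta\bigl((|A|+1)-1\bigr)=\theta|A|$. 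With this paragraph inserted your argument is complete and is essentially the paper's proof, which performs the same two-case analysis (phrased as $\min E_1>i$ versus $\min E_1\leq i$) and invokes Lemma~\ref{obs} at exactly this point.
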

\begin{proof}
 Since $A$ is an $\mathcal S_{\alpha}$-set, we have 
\begin{equation*}
    \|x\| \geq \theta \cdot |A| \geq 1 = \|x\|_{\infty}.
\end{equation*}
Assume that there are sets $E_1<E_2<\dots<E_d$ which give norm of $x$ greater than $\theta \cdot |A|$. If $\min E_1 > i,$ then by Lemma \ref{obs} we arrive at a contradiction. If $\min E_1 \leq i,$ then we have at least one $k$-element set ($k\geq 2$), because by the hypothesis, $A\cup \{i\} \notin S_{\alpha}.$ This  contradicts Lemma \ref{obs} likewise. 

\end{proof}

\section{Isometries on $T[\theta, \mathcal{S}_1]$ spaces for $\theta \in \big(0, \frac{1}{2}\big]$ }

For the space $T[\theta, \mathcal{S}_1]$ $\big(\theta \in (0,1)\big)$ the norm given by the formula \eqref{im} takes the following form:

\begin{equation}\label{nn}
 \|x\|= \max \big\{\|x\|_{\infty} , \|x\|_{T}\big\}, 
\end{equation}
 where
 \begin{equation}\label{nt}
 \|x\|_{T} = \sup \bigg\{\theta \sum_{i=1}^d \big\|E_ix\big\| : d \leq E_1 < E_2<\cdots < E_d, \, d \in \mathbb{N},\, \{E_i\}_{i=1}^d \subset [\mathbb{N}]^{<\omega} \bigg\}.
 \end{equation}

We are now ready to prove Theorem~\ref{Th:A}; the proof emulates the one of \cite[Theorem 4.1]{antunes2022surjective}.

\begin{proof}
Let $Ue_n := \sum_{i=1}^\infty a^n_i e_i$ ($n \in \mathbb{N}$).
\begin{itemize}
\item[] \emph{Claim 1}. 
For any $n\geq \lceil \theta^{-1} \rceil$ we have $U([e_1,e_2,\ldots,e_n]) \subset [e_1,e_2,\ldots,e_n].$

Let $n\geq \lceil \theta^{-1} \rceil$ and $j \in \big\{1, 2, \dots,n\big\}.$ Define $x:=\sum_{i={n}+1}^\infty a^j_i e_i $ and fix $\varepsilon>0.$

As $(Ue_i)_{i=1}^\infty$ is weakly null we may find indices 
$
n<j_1<j_2<\cdots <j_{n}
$
and vectors 
\[
n+1 \leq x' < y_1 < y_2 <\cdots <y_{n},
\] 
so that $\|x'-x\|< \varepsilon$ and $ \big\| Ue_{j_i}- y_i\big\|<\varepsilon$ for $1\leq i \leq {n}$. 

By Lemma \ref{ogl} and since $U$ is an isometry we have
\begin{equation*}
      \theta n = \Bigg\|e_j + \sum_{i=1}^{n} e_{j_i} \Bigg\| =  \Bigg\|\sum_{i=1}^{n} a^j_i e_i + x + \sum_{i=1}^{n} U e_{j_i}\Bigg\| 
\end{equation*}      
Hence by triangle inequality we obtain
\begin{equation}\label{35}
    \Bigg\|\sum_{i=1}^{n} a^j_i e_i + x' + \sum_{i=1}^{n} y_i\Bigg\|\leq \theta n+({n}+1)\varepsilon  
\end{equation}
On the other hand
\begin{equation}\label{36}
     \Bigg\|\sum_{i=1}^{n} a^j_i e_i + x' + \sum_{i=1}^{n} y_i\Bigg\|  \geq  \theta \Bigg(\|x'\| + \sum_{i=1}^{n} \|y_i\|\Bigg) >  \theta \Big(\|x'\| + n(1-\varepsilon)\Big). 
\end{equation}
The first inequality follows from \eqref{w2} and second by the fact that $\|Ue_i\|=1$ for any $i\in \mathbb{N}.$ 
Therefore by \eqref{35} and \eqref{36} we obtain
\begin{equation*}
    \|x\|\leq \|x'-x\|+\|x'\| < \varepsilon + \Big(\big({n}+1\big)\theta^{-1} + {n} \Big)\varepsilon. 
\end{equation*}
Since $\varepsilon$ was arbitrary, we get $\|x\|=0.$
Consequently,
\[
    U\big[e_1, e_2, \ldots, e_{n}\big] \subset \big[e_1, e_2, \ldots, e_{n}\big].
\]
 \item[] \emph{Claim 2}. There exists a permutation $\pi$ of $\big\{1, 2, \ldots,{\lceil \theta^{-1} \rceil}\big\}$ such that $Ue_n=\pm e_{\pi(n)}$ for $n \in \big\{1, 2, \ldots ,{\lceil \theta^{-1} \rceil}\big\}.$
 
 First we will show that the norm on $\big[e_1, e_2, \ldots, e_{\lceil \theta^{-1} \rceil}\big]$ is the supremum norm. Indeed, suppose that the norm of some
 \[
    x=\sum_{i=1}^{\lceil \theta^{-1} \rceil} a_i e_i
 \]
 is given by certain sets $d \leq E_1<E_2<\cdots < E_{d}$ for some $d\in \mathbb{N}$ in the sense that
\begin{equation}\label{bb}
    \|x\|= \theta \cdot \sum_{i=1}^{d}\big\|E_i x\big\|.
\end{equation}

Suppose that $\min E_1 < \lceil {\theta^{-1}} \rceil.$ Then
\begin{equation*}
   d \leq \min E_1 \leq \lceil {\theta^{-1}} \rceil - 1 < {\theta^{-1}},
\end{equation*}
so
\begin{equation*}
    \theta \cdot \sum_{i=1}^{d}\big\|E_ix\big\|\leq \theta \cdot d \cdot \|x\| < \|x\|.
\end{equation*}
Hence \eqref{bb} cannot hold; a contradiction.
Suppose that $\min E_1 \geq \lceil \theta^{-1} \rceil.$ Then the only non-zero coordinate of $(E_1 \cup E_2 \cup \ldots \cup E_d)x$ is $a_{\lceil \theta^{-1} \rceil}$, so 
\[
    \|x\| \leq \theta \cdot |a_{\lceil \theta^{-1} \rceil}| \leq \theta \cdot \|x\|_{\infty} < \|x\|_{\infty}.
\]
This contradiction ends the proof that $\|x\|=\|x\|_{\infty}.$ This means that for each  $n \in \big\{1, 2, \ldots ,{\lceil \theta^{-1} \rceil}\big\}$ there is at least one index $\pi(n)$ so that $\big|a^n_{\pi(n)}\big|=1.$  
By the very definition of the norm and since $U$ is an isometry we have 
\begin{equation*}
    \qquad \quad \, 1=\max \{\, 1,\, 2 \cdot \theta \,\} = \|e_n\pm e_i\|= \big\|U(e_n\pm e_i)\big\| \geq \big\|U(e_n\pm e_i)\big\|_{\infty} \geq \big|a_{\pi(n)}^n \pm a_{\pi(n)}^i\big|    
\end{equation*}
 for any $i\neq n$ in $\big\{1,\ldots, {\lceil \theta^{-1} \rceil}\big\}.$ Therefore $\big|a^i_{\pi(n)}\big|=0$ for any  $i\neq n,$ so $\pi$ is the desired permutation. 
\smallskip
 \item[] \emph{Claim 3}. $Ue_n = \pm e_n$ for $n> \lceil \theta^{-1} \rceil.$

 Let $n=\lceil \theta^{-1} \rceil + 1$. Then, by Claim 1, $Ue_{n}= a^{n}_1 e_1 + a^{n}_2 e_2 +  \dots + a^{n}_{{n}}e_{n}$ and, by Claim 2, for $1 \leq j< n$ we have 
\begin{equation*}
    1=\max \{\, 1,\, 2 \cdot \theta \,\} = \|e_j\pm e_{n}\|= \big\|e_{\pi(j)}\pm Ue_{n}\big\| \geq \big|1\pm a^{n}_{\pi(j)}\big|.
\end{equation*}
Consequently, $a^{n}_{\pi(j)}=0$ for all such $j$, so $Ue_{n}=\pm e_{n}.$

 We now proceed to the inductive step. Fix $n\in \mathbb{N},$ $n> \lceil \theta^{-1} \rceil$ and assume that for $k\in \mathbb{N}$ with $\lceil \theta^{-1} \rceil < k < n$ one has $Ue_k=\pm e_k$. Then by Claim 1 we have $Ue_{n}= a^{n}_1 e_1 + a^{n}_2 e_2 +  \dots + a^{n}_{{n}}e_{n}.$ 
By Claim 2, for $1 \leq j\leq \lceil \theta^{-1} \rceil$ we have 
\begin{equation*}
    1=\max \{\, 1,\, 2 \cdot \theta \,\} = \|e_j\pm e_{n}\|= \big\|e_{\pi(j)}\pm Ue_{n}\big\| \geq \big|1\pm a^{n}_{\pi(j)}\big|.
\end{equation*}
Therefore $a^{n}_{\pi(j)}=0$ for all such $j$. 
Similarly, by the inductive hypothesis, we have 
\begin{equation*}
    1=\max \{\, 1,\, 2 \cdot \theta \,\} = \|e_j\pm e_{n}\|= \|e_{j}\pm Ue_{n}\| \geq \big|1\pm a^{n}_j\big|
\end{equation*}
 for $\lceil \theta^{-1} \rceil < j < n,$ so $a^{n}_{j}=0$ for all such $j.$ Hence $Ue_{n}=\pm e_{n}.$ 
This finishes the proof that the isometry has the desired form. 
\end{itemize}\end{proof}

\begin{remark}\label{l1}
    The reverse implication in Theorem~\ref{Th:A} need not hold. Indeed, let ${\theta^{-1}}=2.1.$ Then, of course, $\lceil {\theta^{-1}} \rceil = 3.$ Let us consider the vectors:
\begin{enumerate}
   \item
$
x = \left( 1, 0, 0, 1, 1, 0 , \ldots \right)
$
    \item
    $
y = \left( 0, 0, 1, 1, 1, 0 , \ldots \right)
    $
\end{enumerate}
The only $\mathcal{S}_1$-set with a minimum equal to $1$ is $\{1\}.$ Since $\{4,5\} \in \mathcal{S}_1,$ so from \eqref{nt} we have
\[
\|x\|_{T} \geq \frac{10}{21} \cdot \left(1 + 1\right) = \frac{20}{21}.
\]
Now that vector $x$ has  only $3$ ones, so by Lemma \ref{obs} this inequality is in fact equality.
Since $\{3,4,5\} \in \mathcal{S}_1,$ so again by \eqref{nt} and Lemma \ref{obs} we obtain
\[
\|y\|_{T} = \frac{10}{21} \cdot \left(1 + 1 + 1\right) = \frac{30}{21}.   
\]

Consequently,  by \eqref{nn}, we have
\begin{enumerate}
\item $\|x\|= \max\{\|x\|_{\infty} , \|x\|_{T}\} = \max\{1 , \frac{20}{21}\} = 1,$
\item $\|y\|= \max\{\|y\|_{\infty} , \|y\|_{T}\} = \max\left\{1 , \tfrac{30}{21}\right\} = \frac{30}{21}.$
\end{enumerate}

Let us define an operator $U$ such that $Ux = y$. More formally, 
\[ 
 U e_1 = e_3, \,\,
U e_2 = e_2,\,\, U e_3 = e_1, \,\,
U e_i = e_i \text{ for } i \geq 4.
\]
It is clear that $U$ is not an isometry.
\end{remark}

\section{Isometries on $T[\theta, \mathcal{S}_{\alpha}]$ for $\theta \in \big(0, \frac{1}{2}\big]$ and $2\leqslant \alpha < \omega_1$ }

We are now ready to prove Theorem~\ref{Th:B}.

\begin{proof}
Let $Ue_n := \sum_{i=1}^\infty a^n_i e_i$ ($n \in \mathbb{N}$).

\noindent\emph{Claim 1}. For any ordinal $2\leq\alpha< \omega_1$ and  $n\in \mathbb{N}$ we have $U([e_1,e_2,\ldots,e_n]) \subset [e_1,e_2,\ldots,e_n].$

Fix an ordinal $2\leq \alpha < \omega_1$, $j \in \big\{1, 2, \dots,n\big\},$ where $n \in \mathbb{N}$ and $\varepsilon>0.$ 
Define $x:= \sum_{i=n+1}^\infty a^j_i e_i$ and $t := \max \{n, \lceil \theta^{-1} \rceil \}$. Take vector $x' \in c_{00}$ such that $x'\geq n+1$ and $\|x'-x\|< \varepsilon$. 
\begin{itemize}
\item[] \emph{Step 1}. There exits a sequence of indieces $\{j_i\}_{i=1}^{\infty}$ and a block sequence $\{y_i\}_{i=1}^{\infty}$  such that $j_1 > t$, $y_1> x'$ and for any $k \in \mathbb{N}$ holds $j_{k+1} > j_k$, $y_{k+1} > \max \{ j_{k},\max \supp \,  y_k\}$ and $\|Ue_{j_k}-y_k\|< \varepsilon$.

We will show this inductively. 

Fix $k=1$. As $(Ue_i)_{i=1}^\infty$ is weakly null we find index $j_1 > t$ and vector $y_1>x'$ so that $ \big\| Ue_{j_1}- y_1\big\|<\varepsilon$. 

Assume that for any $s \leq k$, $s \in \mathbb{N}$ the inductive hypothesis is satisfied. First, applying the inductive hypothesis, determine indieces $t<j_1<\cdots<j_k$ and vectors 
\begin{equation*}
    \begin{split}
         x' &< y_1 \leq \max \{ \, j_1, \max \supp \, y_1 \, \} < y_2
         \leq \cdots \\& \cdots \leq \max \{ \, j_{k-1}, \max \supp \, y_{k-1} \, \} < y_{k}.
    \end{split}
\end{equation*}
As $(Ue_i)_{i=1}^{\infty}$ is weakly null, we can choose $j_{k+1}>j_k$ and a vector $y_{k+1}> \max \{ j_{k},\max \supp \,  y_k\}$ with $ \big\| Ue_{j_{k+1}}- y_{k+1}\big\|<\varepsilon$.

\item[] \emph{Step 2}. 

\emph{Case 1}. Suppose that $\alpha= \beta +1$ for some $\beta < \omega_1$.

Note that every $\mathcal S_{\alpha}$-set whose minimum is $n$ is the union of at most $n$ many $\mathcal S_{\beta}$-sets, so the idea of the proof of this case is to choose the indices $\lceil \theta^{-1} \rceil < j_1<j_2<\cdots<j_m,$ for some $m \in \mathbb{N},$ so that they creates $n$ many maximal and consecutive $\mathcal S_{\beta}$-sets. At the same time, we must ensure that the set \[
\{ \min \supp \, x', \, \min \supp \, y_1, \, \min \supp \, y_2,\dots,\min \supp \, y_m \} 
\]
associated with these indices was also $\mathcal S_{\alpha}$-set, i.e., it was the union of at most (not necessarily maximal) $n+1$ many $\mathcal S_{\beta}$-sets. We proceed as follows.
By the Step $1$ we may choose a~maximal $\mathcal S_{\beta}$-set created from the indices $j_1< j_2 < \dots < j_{m_1},$ for some $m_1 \in \mathbb{N}$ where $j_1 > t$. At the same time, we get vectors $x' < y_1 < y_2 < \dots < y_{m_1}$ so that $\|x'-x\|< \varepsilon$ and $\big\| Ue_{k}- y_k\big\|<\varepsilon$ for $k = {1,\dots, m_1}.$ Next, we apply again the Step $1$ to find the second maximal  $\mathcal S_{\beta}$-set with minimum $j_{m_1+1}$ greater than $j_{m_1}$ and block vectors 
\begin{equation*}
    \begin{split}
         x' &< y_1 \leq \max \{ \, j_1, \max \supp \, y_1 \, \} < y_2
         \leq \cdots \\& \cdots \leq \max \{ \, j_{m_2-1}, \max \supp \, y_{m_2-1} \, \} < y_{m_2},
    \end{split}
\end{equation*}
 for some $m_2\in \mathbb{N}$, so that $\|x'-x\|< \varepsilon$ and $\big\| Ue_{k}- y_k\big\|<\varepsilon$ for $k = {1,\dots, m_2}$. Then, the set 
\[
\{ \min \supp \, y_2, \, \min \supp \, y_3,\dots,\min \supp \, y_{m_1+1} \}\]
is $\mathcal S_{\beta}$-set because the Schreier family (of order $\beta$) is spreading (see Definition \ref{sp}).

Proceeding analogously, we finally arrive at indices 
\[
j_1<j_2 < \cdots <j_{m},
\]
for some $m\in \mathbb{N}$, that form a union of $n$ maximal $\mathcal S_{\beta}$-sets with minima
\[
\big\{ j_1,\, j_{m_1+1}, \dots, j_{m_{n-1}+1}\big\},
\]
so we got the conclusion because we may choose maximal $\mathcal S_{\beta}$-sets

\begin{equation}\label{sets}
        n+1\leq S^1_{\beta} < S^2_{\beta} < \dots < S^{n+1}_{\beta},
\end{equation}

    where
    \begin{itemize}
        \item  
    $
    S^1_{\beta} = \{ \min \supp \, x', \, \min \supp \, y_1 \},
    $
        \item  
    $
    S^2_{\beta} = \{ \min \supp \, y_2,\, \min \supp \, y_3, \ldots, \min \supp \, y_{m_1+1} \},
    $
        \item
    $
    \vdots
    $  
        \item  
    $
    S^{n+1}_{\beta} = \{ \min \supp \, y_{m_{n-1}+2},\, \min \supp \, y_{m_{n-1}+3}, \ldots, \min \supp \, y_{m} \}.
    $
    \end{itemize}
    Note that we profit from the assumption $\alpha \geq 2$ in the fact that $S_{\beta}^1 \in \mathcal S_{\beta}$.

Since the indices $j_1<j_2<\cdots <j_{m}$, chosen as in Step 2 above, form a set that is the union of $n$ maximal and consecutive $\mathcal S_{\beta}$-sets with minimum greater than $n$, by the spreading property of $\mathcal S_{\beta}$ and Lemma \ref{ogl} (as $\{j,j_1,j_2, \ldots j_m \}$ is not an $\mathcal S_{\alpha}$-set), we have
\begin{equation}\label{d}
     \Bigg\|e_j + \sum_{i=1}^{m} e_{j_i}\Bigg\|  = \theta \cdot m.
\end{equation}

Since $U$ is an isometry, we obtain
\begin{equation}\label{dd}
    \Bigg\|e_j + \sum_{i=1}^{m} e_{j_i}\Bigg\| = \Bigg\|\sum_{i=1}^n a^j_i e_i + x + \sum_{i=1}^{m} U e_{j_i}\Bigg\|.
\end{equation}
By \eqref{d}, \eqref{dd}, and the triangle inequality 
\begin{equation}\label{3335}
    \Bigg\|\sum_{i=1}^n a^j_i e_i + x' + \sum_{i=1}^{m} y_i\Bigg\|\leq \theta \cdot m+(m+1)\varepsilon. 
\end{equation}
On the other hand,
\begin{equation}\label{3336}
     \Bigg\|\sum_{i=1}^n a^j_i e_i + x' + \sum_{i=1}^{m} y_i\Bigg\| \geq  \theta \Bigg(\|x'\| + \sum_{i=1}^{m} \|y_i\|\Bigg) > \theta \big(\|x'\| + m - m\varepsilon\big);
\end{equation}
the former inequality follows from \eqref{w1} as we may choose sets as in \eqref{sets},
whereas the latter one holds because $\big\|Ue_i\big\|=1$ ($i\in \mathbb{N}$). Thus, by \eqref{3335} and \eqref{3336}, we have
\[
\|x\|\leq \|x'-x\|+\|x'\| < \varepsilon + \big(\theta^{-1}(m+1)+m\big)\varepsilon,
\] 
so $\|x\|=0$. Consequently, $Ue_j= a^j_1 e_1 + a^j_2 e_2+ \dots + a^j_n e_n.$ \smallskip

\item[] \emph{Case 2}: Suppose that $\alpha$ is a limit ordinal.

We proceed as in Case $1$ for $\alpha= \alpha_n,$ where $(\alpha_i)_{i=1}^{\infty}$ is a fixed strictly increasing sequence of successor ordinals converging to $\alpha$ with  $\mathcal S_{\beta_i} \subset \mathcal S_{\beta_k}$ for $i \leq k$, where $\alpha_n := \beta_{n} +1$ for each $n \in \mathbb{N}$, choosing suitable sequences $(j_k)_{k=1}^m$ and $(y_k)_{k=1}^m$.
        Indeed, an $\mathcal S_{\alpha}$-set whose minimum is $n$ must be an $\mathcal S_{\alpha_n}$-set and $\mathcal S_{\beta_n}$-sets $n+1\leq S^1_{\beta_n} < S^2_{\beta_n} < \dots < S^{n+1}_{\beta_n}$, where
    \begin{itemize}
        \item  
    $
    S^1_{\beta_n} = \{ \min \supp \, x', \, \min \supp \, y_1 \},
    $
        \item  
    $
    S^2_{\beta_n} = \{ \min \supp \, y_2,\, \min \supp \, y_3, \ldots, \min \supp \, y_{m_1+1} \},
    $
        \item
    $
    \vdots
    $  
        \item  
    $
    S^{n+1}_{\beta_n} = \{ \min \supp \, y_{m_{n-1}+2},\, \min \supp \, y_{m_{n-1}+3}, \ldots, \min \supp \, y_{m} \},
    $
    \end{itemize}
    give rise to an $\mathcal S_{\alpha}$-set (even an $\mathcal S_{\alpha_n}$-set). Moreover, the set $A:= \{j, j_1, j_2, \ldots, j_m\}$ is not an $\mathcal S_{\alpha}$-set, as we ensured that $\mathcal S_{\beta_i} \subset \mathcal S_{\beta_k}$ for $i \leq k$. Indeed, suppose $A \in \mathcal S_{\alpha}$, then $A \in \mathcal S_{\alpha_j}$ i.e. $A$ is the union of at most $j$-many successive $\mathcal S_{\beta_j}$-sets, i.e. $\mathcal S_{\beta_n}$-sets by the assumption on $(\beta_n)_{n}$. This contradicts the choice of $\{j_1,j_2, \ldots, j_m\}$ as the union of $n$-many maximal $S_{\beta_n}$-sets by the spreading property of $S_{\beta_n}$.
\end{itemize}
\noindent\emph{Claim 2}. $Ue_n = \pm e_n$ for $n\in \mathbb{N}.$   
\begin{itemize}
    \item[]
Set $n=1$. Then by Claim 1 we have $Ue_{1}= a^{1}_1 e_1.$ Since  $\big\|Ue_1\big\|=1$, so $Ue_{1}=\pm e_{1}.$

 We now proceed to the inductive step. Fix $n\in \mathbb{N}$ and assume that for $k\in \mathbb{N}$ with $k < n$ one has $Ue_k=\pm e_k$. Then by Claim 1 we have $Ue_{n}= a^{n}_1 e_1 + a^{n}_2 e_2 +  \dots + a^{n}_{{n}}e_{n}.$ 
By the very definition of the norm and the inductive hypothesis we have
\begin{equation*}
    1=\max \{1,\, 2 \cdot \theta\} = \|e_k\pm e_{n}\|= \big\|e_{k}\pm Ue_{n}\big\| \geq \big\|e_{k}\pm Ue_{n}\big\|_{\infty} \geq \big|1\pm a^{n}_k\big|
\end{equation*}
for $k\in \{1,2,\dots, n-1\}.$ Hence $a^{n}_{k}=0$ for all such $k.$  Since $\big\|Ue_i\big\|=1$ ($i\in \mathbb{N}$), so $Ue_{n}=\pm e_{n}.$ 
\end{itemize}
\end{proof}

\bibliography{bibliography.bib}

\begin{thebibliography}{1}

\bibitem{antunes2022surjective}
L.~Antunes and K.~Beanland.
\newblock Surjective isometries on {Banach} sequence spaces: A survey.
\newblock {\em Concrete Operators}, 9(1):19--40, 2022.

\bibitem{BrechFerencziTcaciuc2020}
C.~Brech, V.~Ferenczi, and A.~Tcaciuc.
\newblock Isometries of combinatorial {Banach} spaces.
\newblock {\em Proc. Am. Math. Soc.}, 148(11):4845--4854, 2020.

\bibitem{causey2017concerning}
Ryan~M Causey.
\newblock Concerning the szlenk index.
\newblock {\em Studia Mathematica}, 236:201--244, 2017.

\bibitem{FigielJohnson1974}
T.~Figiel and W.~B. Johnson.
\newblock A uniformly convex {Banach} space which contains no {{\(\ell_p\)}}.
\newblock {\em Compos. Math.}, 29:179--190, 1974.

\bibitem{todorcevic2005ramsey}
S.~Todor\v{c}evi\'c and S.~Argyros.
\newblock {\em Ramsey Methods in Analysis}.
\newblock Advanced Courses in Mathematics - CRM Barcelona. Birkh\"auser, 2005.

\bibitem{tsirelson1974impossible}
B.~Tsirelson.
\newblock It is impossible to embed $\ell_p$ or $c_0$ into an arbitrary
  {Banach} space ({Russian}).
\newblock {\em {Funkts. Anal. i Prilozhen} English translation: {Funct. Anal.
  Appl}}, 8:138--141, 1974.

\end{thebibliography}
\bibliographystyle{plain}

\end{document}